\newtheorem{thm}{Theorem}[section]
\theoremstyle{plain}
\newtheorem{prop}[thm]{Proposition}
\theoremstyle{remark}
\numberwithin{equation}{section}
\newtheorem*{remark}{Remark}
\begin{document}

\title{Non-degeneracy of the harmonic structure \\on Sierpi\'nski Gaskets}

\author[Konstantinos Tsougkas]{Konstantinos Tsougkas}

\address{Konstantinos Tsougkas\\
        Department of Mathematics\\
        Uppsala university, Sweden}

\date{\today}

\email{konstantinos.tsougkas@math.uu.se}

\maketitle

\begin{abstract}

We prove that the harmonic extension matrices for the two dimensional level-$k$ Sierpi\'nski gasket are invertible for every $k \geq 2$. This has been previously conjectured to be true by Hino in \cite{hino2009energy} and \cite{hino2015some} and tested numerically for $k \leqslant 50$. We also give a necessary condition for the non-degeneracy of the harmonic structure for general finitely ramified self-similar sets based on the vertex connectivity of their first graph approximation.

\end{abstract}

\section{Introduction}\noindent
The Dirichlet problem for the Laplace operator has been studied in a variety of settings: domains, manifolds, graphs. One newer context is that of analysis on fractals \cite{ben1999not,kigami2001analysis,MR1761364,MR1765920,MR1761365}. However harmonic functions on fractals exhibit a notable difference compared to those of $\mathbb{R}^d$. Among many properties of harmonic functions on $\mathbb{R}^d$, it is known (see for example \cite{MR1805196}) that if a harmonic function defined on a domain $\Omega$ is constant on a non-empty open subset of $\Omega$, then it is constant everywhere in $\Omega$. However this does not hold in the case of fractals where we can have examples of non-constant harmonic functions being constant on smaller cells, in which case we say that we have a \emph{degenerate harmonic structure}. Such examples include the Snowflake set, the Vicsek set and the Hexagasket constructed from three boundary vertices \cite{strichartz2006differential} among others. A widely studied self-similar set, often being used as a prototype for most results in the theory, is the two dimensional Sierpi\'nski gasket. A variant of it can be created by dividing the line segments of the initial triangle into $k \geq 2$ segments of equal length which gives us a family of self-similar sets called the Sierpi\'nski gaskets of level $k$, with the familiar Sierpi\'nski gasket denoted as $SG_2$. The non-degeneracy of the harmonic structure is well known for $SG_2$ and $SG_3$ and Hino has checked it numerically for all $k \leqslant 50$ and has conjectured it to be the case for all $SG_k$ in \cite{hino2009energy} and \cite{hino2015some}. The aim of this paper is to give an affirmative answer to this conjecture and also give a necessary condition for a self-similar fractal to have a non-degenerate harmonic structure. Our main theorem is the following.

\begin{thm}
For every $k \geq 2$ the harmonic structure on the two dimensional $SG_k$ is non-degenerate.
\end{thm}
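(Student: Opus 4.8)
The plan is to reduce the invertibility of the harmonic extension matrices to a statement about the harmonic coordinate map, and then to a monotonicity property of hitting probabilities. First I would recall that, writing $V_0=\{p_1,p_2,p_3\}$ for the three boundary vertices and $h_1,h_2,h_3$ for the harmonic functions with $h_j|_{V_0}=e_j$, every harmonic $h$ equals $\sum_j h(p_j)\,h_j$, so the extension matrix $A_i$ attached to a cell $F_i(K)$ with vertices $A,B,C\in V_1$ is exactly the matrix $M_i$ whose rows are $\Phi(A),\Phi(B),\Phi(C)$, where $\Phi=(h_1,h_2,h_3)$ is the harmonic coordinate map. Since $\sum_j h_j\equiv 1$, each row of $M_i$ sums to $1$ and $M_i$ fixes $(1,1,1)$; hence $A_i$ is invertible if and only if the three points $\Phi(A),\Phi(B),\Phi(C)$ are affinely independent (non-collinear) in the plane $\{x+y+z=1\}$. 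Because $A_\omega=A_{\omega_1}\cdots A_{\omega_n}$, it suffices to prove this for the finitely many level-one cells, uniformly in $k$.

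Next I would isolate two lemmas whose combination gives the result. The first is a purely linear-algebraic fact: a $3\times3$ matrix whose rows sum to $1$ and in which each diagonal entry is the strict maximum of its column is non-singular. Indeed, if the three rows were collinear they would be three distinct points on a line, one of which, say the $j$-th, lies strictly between the other two; then its $j$-th coordinate is at most the larger of the other two $j$-th coordinates, contradicting that the $(j,j)$ entry strictly dominates its column. The second is the geometric input I call the Monotonicity Lemma: in each upward cell $\{A,B,C\}$, if $A$ is the vertex closest to the corner $p_1$ (largest first barycentric coordinate), then $h_1(A)>h_1(B)$ and $h_1(A)>h_1(C)$, and similarly for $p_2,p_3$. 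Ordering the rows of $M_i$ so that row $j$ is the vertex nearest $p_j$, the Monotonicity Lemma says precisely that each diagonal entry is the strict column maximum, so the linear-algebra lemma yields $\det M_i\neq0$ for every cell.

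The main obstacle is therefore the Monotonicity Lemma, and everything uniform in $k$ is concentrated there. Using the probabilistic reading $h_j(x)=\mathbb{P}_x[\tau_{p_j}<\tau_{p_i}\ (i\neq j)]$ for the simple random walk on the first graph approximation (simple because the natural harmonic structure on $SG_k$ has equal conductances by symmetry), it suffices to show that a single lattice step toward $p_j$ strictly increases this hitting probability; by the reflection symmetry interchanging $p_2$ and $p_3$ one direction implies the others, so the whole lemma reduces to strict monotonicity of $h_1$ along the two lattice directions $(+1,-1,0)$ and $(+1,0,-1)$. I would establish this by a monotone coupling of the two walks started at adjacent vertices, together with the strong maximum principle on the connected graph $G_1$ to upgrade the resulting weak inequality to a strict one; an alternative is to apply the maximum principle directly to the discrete directional difference of $h_1$, checking its sign on the boundary of the relevant region. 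The same non-folding phenomenon can be viewed through the Rado--Kneser--Choquet / Tutte embedding circle of ideas, but since the harmonic image of the outer boundary is not convex that machinery does not apply verbatim, which is why I would route the argument through the explicit monotonicity above rather than a general embedding theorem.
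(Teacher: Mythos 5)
Your reduction is correct as far as it goes: identifying $A_i$ with the matrix whose rows are $(h_1,h_2,h_3)$ evaluated at the three vertices of the cell, noting that each row sums to $1$, matching each vertex of an upward cell to its nearest corner, and observing that strict column-dominance of the diagonal then forces $\det A_i\neq 0$ --- all of that is sound linear algebra. But it transfers the entire content of the theorem into your ``Monotonicity Lemma,'' and that is exactly where the proposal stops being a proof. Neither of your two suggested arguments goes through as described. For the maximum-principle route: the directional difference $g(x)=h_1(x)-h_1(x')$ ($x'$ the down-left neighbour of $x$) satisfies the discrete mean-value property only where the graph is translation-consistent, i.e.\ where $x$ and $x'$ have matching neighbourhood structure. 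Recall that in $G_1$ of $SG_k$ only the three corners are Dirichlet vertices; the vertices on the sides of the big triangle are \emph{free} vertices of degree $4$, while interior vertices have degree $6$. Consequently $g$ fails to be harmonic at vertices on or adjacent to the right side and the bottom side, and these vertices form precisely the ``boundary of the relevant region'' on which you would have to check the sign of $g$. That sign is not boundary data --- it is itself an instance of the monotonicity you are trying to prove, so the argument is circular. For the coupling route: a reflection coupling needs the reflection swapping $v$ and $v'$ to be a symmetry of the domain and of the graph's degree structure, but the perpendicular bisector of a generic lattice edge is not a symmetry axis of the triangle (the reflected half sticks out of it), and you give no construction that overcomes this. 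The lemma itself is very plausible (it holds for $k=2,3$ by direct computation), but nothing in your outline establishes it for all $k$, and proving it looks essentially as hard as the theorem.

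It is worth noting that the obstacle you invoke to set aside the Tutte-embedding circle of ideas --- ``the harmonic image of the outer boundary is not convex'' --- is exactly what the paper removes with a one-line trick, and this is the idea your proposal is missing. The paper adds three extra edges joining the corners $q_1,q_2,q_3$ to form $\tilde G_1$. This does not change the Dirichlet problem at all, since harmonicity is never imposed at the boundary vertices, and it makes $\tilde G_1$ a simple $3$-connected planar graph whose outer face is just the triangle on the three corners. Tutte's spring theorem then applies with the corners placed so that the $y$-coordinates realize the prescribed boundary values $\alpha,1,0$; the barycentric (rubber-band) embedding has $y$-coordinate equal to the harmonic extension, so a cell on which a non-constant harmonic function is constant would be a collinear, hence degenerate, face of the embedding --- contradicting Tutte's theorem. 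So the embedding machinery does apply after this modification, it requires no monotonicity statement whatsoever, and it settles every cell at once. Compared with that, your route would be more quantitative if the Monotonicity Lemma could be proved (it gives an explicit ordering of the entries of $A_i$, not just invertibility), but as written it is incomplete precisely at its only substantive step.
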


We will now give specific details regarding the topic. The Sierpi\'nski gaskets of level $k$ are the attractor of the iterated function system $F_i(x)=x/k  +b_{i,k}$
for some proper choice of $b_{i,k}$, in which case $SG_k$ is the unique non-empty compact set such that
$$SG_k=\bigcup_{i=1}^{\frac{1}{2}k(k+1)} F_i(SG_k).$$

\begin{figure}
\centering
\includegraphics[scale=0.41]{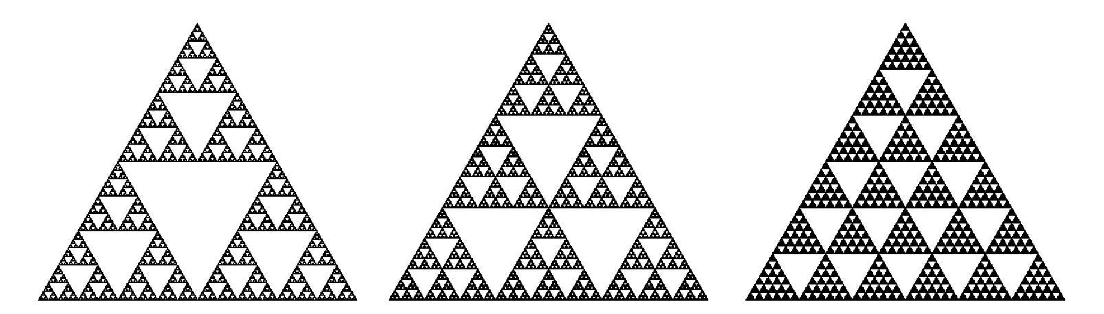}
\caption{Sierpi\'nski gaskets of level $2$, $3$, and $6$.}
\end{figure}

These are post critically finite self-similar sets and their boundary is always defined to be the set of vertices of the outermost triangle and is denoted by $V_0=\{q_1,q_2,q_3\}$. A set of the form $F_iK$ for some $i\in \{1,2,\dots ,k(k+1)/2\}$ is called a \emph{cell}.
These self-similar sets can be approximated via a sequence of so-called fractal graphs with $G_0$ being the complete graph on the boundary and then $G_m$ being $(k(k+1)/2)^m$ copies of it identified at appropriate points.  
On these graphs we can define the \emph{renormalized energy} of a function as
$${\mathcal E}_m(u,v)=r^{-m} \sum_{x\sim_m y} (u(x)-u(y))(v(x)-v(y))$$
where $r$ is called the renormalization constant which for example equals $3/5$ for $SG_2$ but is different for each $SG_k$. The exact value of $r$ for each $SG_k$ is unknown, some investigations regarding that have been made at \cite{freiberg2013exact} and \cite{hambly2002asymptotics}. By taking the limit we have the energy form which is an inner product on the space of functions of finite energy modulo constants. Harmonic functions on $G_m$ are functions with fixed values at $V_0$ and the rest of the values chosen so that they minimize the energy of the graph. Alternatively, they are characterized by solving the Dirichlet problem
$$\Delta_m h(x)=0 \text{ for every } x \notin V_0$$ where $\Delta_m$ is the discrete graph Laplacian. In that case, the values of a harmonic function can be determined on $G_1$ by solving a system of linear equations. On any given cell, we have for $1 \leqslant i \leqslant k(k+1)/2$ that
$$\left(
\begin{array}{ccc}
h(F_iq_1)\\
h(F_iq_2)\\
h(F_iq_3)
\end{array}
\right)=A_i \left(
\begin{array}{ccc}
h(q_1)\\
h(q_2)\\
h(q_3)
\end{array}
\right)$$
where the matrices $A_i$ are called \emph{harmonic extension matrices}. If they are invertible for every $i$ the harmonic structure is called non-degenerate. A harmonic extension matrix $A_i$ being singular is easily seen to be equivalent to the existence of a non-constant harmonic function being constant on the cell $F_iK$. There is also a probabilistic interpretation connecting random walks on graphs with harmonic functions and electrical networks, we refer the reader to \cite{snell2000random} for a detailed exposition. We denote by $V_m$ the vertex set of $G_m$ and $V^{*}=\cup_{n=0}^{\infty} V_n$. Then $V^{*}$ is dense in $SG_k$ and since functions of finite energy are always uniformly continuous it suffices to study them on $V^{*}$. In the case that the harmonic structure is non-degenerate, we have that the space of harmonic functions is $3$-dimensional with a basis being $h_i(q_j)=\delta_{ij}$ for $i,j=1,2,3$. By prescribing the values at the boundary, we can inductively evaluate the values of the harmonic function for each $G_m$ and thus in the limit for $V^{*}$. On $SG_2$ this gives us the familiar ``$1/5-2/5$" extension rule. The Sierpi\'nski gaskets can also be constructed in higher dimensions where the conjecture regarding the non-degeneracy of their harmonic structure at the time of this writing remains open. In this paper we will only focus on the two dimensional case.

The Laplace operator is then defined weakly via integration against a measure, the most common choice being the Hausdorff measure with a proper normalization. However attention has been given recently to so-called energy measures which are defined through the energy of functions on the fractal graph approximations. For a function $f$ of finite energy, its \emph{energy measure} $\nu_f$ is defined as $$\nu_{f}(F_wK)= \lim_{m\to \infty}r^{-m}\sum_{\{x,y\in F_wV_0; \; x\sim_m y\}} \left(f(x)-f(y)\right)^2.$$ 
If we pick an orthonormal basis, with respect to the energy inner product, of harmonic functions modulo constants we can define the \emph{Kusuoka measure} as $\nu=\nu_{h_1}+\nu_{h_2}$ and this definition is independent of the choice of the orthonormal basis. Moreover the Kusuoka measure is singular with respect to the Hausdorff one and it can also be shown that every energy measure is absolutely continuous with respect to the Kusuoka measure. Further information regarding the Kusuoka measure can be found among others at \cite{bell2012energy,johansson2015ergodic,kusuoka1989dirichlet}. A motivating factor for our investigation has been the expansion of known results to a wider class of self-similar sets. Particularly, in \cite{hino2009energy} and \cite{hino2015some} properties of the energy measures on p.c.f. self-similar sets have been studied in a more general setting and those results hold only for self-similar sets under the assumption that their harmonic structure is non-degenerate. Therefore, an implication of our theorem is that all the results proven under the non-degeneracy assumption are therefore now valid for all two dimensional $SG_k$ and some of their variants as explained in section 2. One such example is a conjecture of Bell, Ho and Strichartz found in \cite{bell2012energy} which was proved by Hino in \cite{hino2015some} in a more general setting for self-similar sets satisfying the non-degeneracy of the harmonic structure assumption. Another example are the results of \cite{pelander2008products}. We can now apply to all $SG_k$ the result found in \cite{hino2009energy} giving us the following characterization of energy measures.
\begin{thm}
For every non-constant harmonic function on $SG_k$ the energy measure $\nu_h$ is a minimal energy-dominant measure. In particular, for any two non-constant harmonic functions $h_1$, $h_2$, the energy measures $\nu_{h_1}$ and $\nu_{h_2}$ are mutually absolutely continuous.
\end{thm}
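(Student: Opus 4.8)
The plan is to obtain this statement as a direct corollary of Theorem 1.1 together with the general theory developed by Hino in \cite{hino2009energy}, so that the work consists of a hypothesis check rather than a fresh construction. Recall that a finite Borel measure $\mu$ on $SG_k$ is \emph{energy-dominant} if every energy measure $\nu_f$ is absolutely continuous with respect to it, and \emph{minimal energy-dominant} if moreover $\mu \ll \mu'$ for every energy-dominant $\mu'$. The Kusuoka measure $\nu=\nu_{h_1}+\nu_{h_2}$ is the prototypical energy-dominant measure, and the point of the theorem is that on $SG_k$ a single non-constant harmonic function already produces a minimal one.

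The key observation is that the entire apparatus of \cite{hino2009energy} --- in particular the computation that the index (equivalently, the martingale dimension) of the self-similar Dirichlet form equals $1$, from which the minimality of $\nu_h$ for non-constant harmonic $h$ is derived --- is carried out under the standing hypothesis that the harmonic structure is non-degenerate. Therefore the only thing I would need to verify is that each $SG_k$ falls within the scope of the relevant theorem there. These hypotheses amount to: $SG_k$ is post-critically finite with a regular, symmetric harmonic structure (classical, and recorded in Section 1), together with the non-degeneracy of all harmonic extension matrices $A_i$. The latter is precisely the content of Theorem 1.1 for every $k \geq 2$. With that input Hino's result applies directly and yields that $\nu_h$ is minimal energy-dominant for every non-constant harmonic function $h$ on $SG_k$.

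The ``in particular'' clause is then a purely formal consequence. Any two minimal energy-dominant measures are mutually absolutely continuous: each is itself energy-dominant, and by minimality each is absolutely continuous with respect to every energy-dominant measure, hence with respect to the other. Applying this to $\nu_{h_1}$ and $\nu_{h_2}$ gives the claim. I do not expect any genuine obstacle in this deduction; all the difficulty sits upstream in Theorem 1.1, and once non-degeneracy is available for all $SG_k$ the present statement follows immediately.
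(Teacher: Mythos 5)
Your proposal is correct and matches the paper's own treatment: the paper derives this theorem exactly as you do, by invoking Hino's result from \cite{hino2009energy} (whose minimality conclusion for $\nu_h$ holds under the non-degeneracy hypothesis) and supplying that hypothesis for every $SG_k$ via Theorem 1.1. Your additional remark that mutual absolute continuity of $\nu_{h_1}$ and $\nu_{h_2}$ follows formally from minimality is a correct and harmless elaboration of the ``in particular'' clause.
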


\section{Barycentric embedding of $SG_k$}
Our approach is based on geometric graph theory, an exposition of which can be found in \cite{lovasz1999geometric}. Recall that a finite undirected graph is called simple if it has no loops or multiple edges, it is called planar if it can be embedded in the plane in a way that its edges never intersect except at their corresponding vertices and it is called $k$-connected if it can not be made disconnected by removing any $k-1$ vertices. Its vertex connectivity is $k$ is the maximal integer $k$ so that the graph is $k$-vertex connected. Moreover, if we have points $x_1,x_2 , \dots , x_k$ in $\mathbb{R}^2$ we call their \emph{barycenter} or \emph{centroid} the point 
$\tilde{x}=\frac{1}{k}\sum_{i=1}^k x_i.$
If we take a simple $3$-connected planar graph, and then place the vertices bounding a face of it on the plane forming a convex polygon, then we call the \emph{rubber band representation} of it the graph created by letting all the other free vertices be positioned at the barycenter of their neighbors. The edges are drawn as straight line segments connecting the proper vertices.  The terminology is motivated by thinking of the edges as rubber bands satisfying Hooke's Law. Tutte's spring theorem, first proven in \cite{tutte1963draw}, states that this algorithm gives us a crossing free plane embedding and moreover that every face of the corresponding planar embedding is convex. This is also known as a Tutte embedding. We give Tutte's spring theorem as stated in \cite{lovasz1999geometric}.
\begin{thm}
Let $G$ be a simple $3$-connected planar graph. Then its rubber band representation gives us an embedding of $G$ into $\mathbb{R}^2$.
\end{thm}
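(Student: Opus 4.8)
The plan is to realize the rubber band representation as the unique equilibrium of a spring energy and then extract planarity from a discrete maximum principle together with the $3$-connectivity hypothesis. First I would fix the bounding face, whose vertices are placed as a strictly convex polygon $P$, and consider the energy $E(\mathbf{x})=\sum_{uv\in E}\norm{x_u-x_v}^2$ as a function of the positions of the free (interior) vertices, the boundary vertices being held at their prescribed positions on $P$. Since this is a positive semidefinite quadratic form whose associated matrix is the graph Laplacian, and since $G$ is connected, its restriction to the free coordinates is positive definite and hence strictly convex; a unique minimizer therefore exists. Setting the gradient to zero shows that at the minimizer each free vertex sits exactly at the barycenter of its neighbors, so the minimizer is precisely the rubber band representation. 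This disposes of existence and uniqueness and reduces everything to analyzing this one configuration.

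The second ingredient is a discrete maximum principle. For any fixed direction $u\in\R^2$, the coordinate function $f(v)=\LL x_v,u\RR$ is \emph{harmonic} at every free vertex, in the sense that it equals the average of its values over the neighbors. Consequently $f$ attains its extrema only on the boundary polygon, and more sharply, for any line $\ell$ through a free vertex $v$ either $v$ has neighbors strictly on both open sides of $\ell$, or all neighbors of $v$ (and, propagating this, an entire connected region) lie on $\ell$. The latter alternative is excluded using $3$-connectivity: via Menger's theorem every interior vertex is joined to the boundary cycle by three vertex-disjoint paths, along each of which the height function $f$ has no interior local extremum, so a whole region cannot collapse onto a single line while the boundary sits in strictly convex position. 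This rules out degenerate (collinear) stars and is the technical heart of the argument.

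With non-degeneracy in hand I would prove the \emph{separation lemma}: for every line $\ell$, the vertices lying strictly on each side of $\ell$, if any, induce a connected subgraph. This again rests on the harmonicity of the height function and the three disjoint paths to the boundary supplied by Menger. From the separation lemma one deduces that around each vertex the straight edges emanate in the cyclic angular order prescribed by the abstract planar rotation system, that every bounded face maps to a nondegenerate convex polygon, and that the interiors of distinct faces are disjoint. Combining these facts yields that the straight-line drawing is a crossing-free embedding realizing the given planar structure, which is the assertion.

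The step I expect to be the main obstacle is passing from the purely local, algebraically trivial barycenter condition to the global, topological conclusion of no crossings and convex faces. This is exactly where $3$-connectivity is indispensable; for merely $2$-connected graphs a separating pair can collapse an entire region to a segment or a point and destroy the embedding. I therefore expect the careful bookkeeping to lie in the separation lemma and in verifying that the angular order of the edges around each vertex is faithful to the planar embedding.
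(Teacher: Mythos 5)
A preliminary remark: the paper does not prove this statement at all. It is Tutte's spring theorem, which the paper quotes as a known result (citing Tutte's 1963 article and Lov\'asz's exposition) and then uses as a black box in the proof of Theorem 1.1. So there is no proof in the paper to compare yours against; your proposal has to stand on its own. Its architecture is the standard one from the literature: energy minimization for existence and uniqueness of the equilibrium, the discrete maximum principle, a half-plane/separation lemma, and finally convexity and disjointness of faces. The first step (positive definiteness of the reduced Laplacian, hence a unique minimizer satisfying the barycenter condition) is correct and complete as you state it.

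However, two of your steps have genuine gaps. First, the claim that along each of the three vertex-disjoint paths supplied by Menger's theorem ``the height function $f$ has no interior local extremum'' is false as a consequence of harmonicity: harmonicity says $f(v)$ is the average of $f$ over \emph{all} neighbors of $v$, and it places no constraint on the restriction of $f$ to a prescribed path, which can oscillate freely. So the non-degeneracy step, which you yourself call the technical heart, does not go through as described. Second, and more structurally, planarity does no work in any step you actually carry out: existence/uniqueness, the maximum principle, your non-degeneracy argument, and the separation lemma use only $3$-connectivity and the convex position of the outer face. That cannot suffice. Take $K_5$ with three vertices pinned as an outer triangle: it is simple and $3$-connected, the unique equilibrium places \emph{both} free vertices at the centroid of the triangle (subtracting the two barycenter equations forces them to coincide), so the rubber band representation is not an embedding --- yet every intermediate claim of yours holds for this configuration: the equilibrium is unique, the maximum principle holds, no vertex has a collinear star (the neighbors of each free vertex include the triangle's corners), and the vertices strictly on either side of any line induce connected subgraphs (all induced subgraphs of $K_5$ are complete). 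Hence the entire content of the theorem is concentrated in the step you dispatch with ``from the separation lemma one deduces'' the correct cyclic order of edges and disjointness of face interiors. That is exactly where planarity (via Euler's formula, a forbidden-minor contradiction, or the uniqueness of embeddings of $3$-connected planar graphs) must be invoked, and your outline gives no indication of how that deduction runs.
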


In Figure 2 we present an example of barycentric embedding in the plane of the first graph approximation of $SG_2$ and $SG_3$.
\begin{figure}
\centering
\begin{tikzpicture}[scale=0.37]
\draw (-5,0)--(-1,{2*sqrt(3)});
\draw (5,0)--(1,{2*sqrt(3)});
\draw (5,0)--(0,{sqrt(3)});
\draw (-5,0)--(0,{sqrt(3)});
\draw (-1,{2*sqrt(3)})--(1,{2*sqrt(3)});
\draw (-1,{2*sqrt(3)})--(0,{sqrt(3)});
\draw (1,{2*sqrt(3)})--(0,{sqrt(3)});
\draw (-1,{2*sqrt(3)})--(0,{5*sqrt(3)});
\draw (1,{2*sqrt(3)})--(0,{5*sqrt(3)});
\end{tikzpicture}
\begin{tikzpicture}[scale=0.37]
\draw (-5,0)--(-4/3,{sqrt(3)});
\draw (5,0)--(4/3,{sqrt(3)});
\draw (5,0)--(5/3,{4*sqrt(3)/3});
\draw (-5,0)--(-5/3,{4*sqrt(3)/3});
\draw (-4/3,{sqrt(3)})--(4/3,{sqrt(3)});
\draw (-4/3,{sqrt(3)})--(-5/3,{4*sqrt(3)/3});
\draw (4/3,{sqrt(3)})--(5/3,{4*sqrt(3)/3});
\draw ({-4/3},{sqrt(3)})--(0,{5*sqrt(3)/3});
\draw (4/3,{sqrt(3)})--(0,{5*sqrt(3)/3});
\draw (-5/3,{4*sqrt(3)/3})--(0,{5*sqrt(3)/3});
\draw (5/3,{4*sqrt(3)/3})--(0,{5*sqrt(3)/3});
\draw (-1/3,{8*sqrt(3)/3})--(0,{5*sqrt(3)/3});
\draw (1/3,{8*sqrt(3)/3})--(0,{5*sqrt(3)/3});
\draw (-5/3,{4*sqrt(3)/3})--(-1/3,{8*sqrt(3)/3});
\draw (5/3,{4*sqrt(3)/3})--(1/3,{8*sqrt(3)/3});
\draw (-1/3,{8*sqrt(3)/3})--(1/3,{8*sqrt(3)/3});
\draw (-1/3,{8*sqrt(3)/3})--(1/3,{8*sqrt(3)/3});
\draw (-1/3,{8*sqrt(3)/3})--(0,{15*sqrt(3)/3});
\draw (1/3,{8*sqrt(3)/3})--(0,{15*sqrt(3)/3});
\end{tikzpicture}
\caption{A barycentric embedding of $G_1$ for $SG_2$ and $SG_3$ with boundary vertices fixed at equal distances, positioned at $(0, \sqrt{3}),(-1,0),(1,0)$.}
\end{figure}
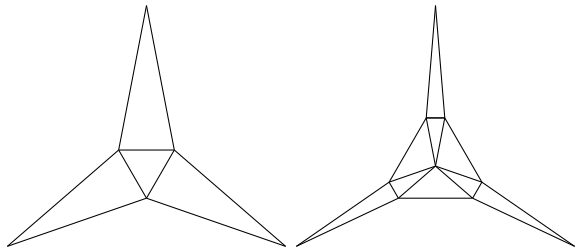
We are now ready to present the proof of Theorem 1.1.
\begin{proof}[Proof of Theorem 1.1]
Let $k \geq 2$ and assume that the harmonic structure on $SG_k$ is degenerate and let $A_i$ be a singular harmonic extension matrix. We will only concern ourselves with the first level graph approximation of $SG_k$ since this is where the harmonic extension matrices are constructed on. The matrix being singular implies that there exists a harmonic function $h$ that is non-constant on the boundary $q_1,q_2,q_3$ but is constant on the cell $F_iK$ with $h(F_i(q_1))=h(F_i(q_2))=h(F_i(q_3))$. By the addition of constants and normalizing we can in fact assume that $h(q_1)=\alpha \geq 1$, $h(q_2)= 1$ and $h(q_3)=0$ with some possible relabeling of the boundary vertices. 

Call $\tilde{G_1}$ the slightly modified $G_1$ graph by adding three extra edges connecting the three boundary vertices $q_1, q_2, q_3$ as in Figure 3.
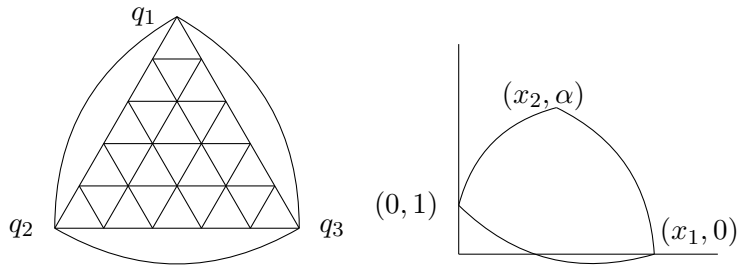
\begin{figure}
\centering
\newcommand*\rows{5}
\hspace*{-1.3cm}
\begin{tikzpicture}[scale = 0.65]
    \foreach \row in {0, 1, ...,\rows} {
        \draw ($\row*(0.5, {0.5*sqrt(3)})$) -- ($(\rows,0)+\row*(-0.5, {0.5*sqrt(3)})$);
        \draw ($\row*(1, 0)$) -- ($(\rows/2,{\rows/2*sqrt(3)})+\row*(0.5,{-0.5*sqrt(3)})$);
        \draw ($\row*(1, 0)$) -- ($(0,0)+\row*(0.5,{0.5*sqrt(3)})$);
    }
    \draw (0,0) to[bend right] (5,0);
    \draw (0,0) to[bend left] (2.5,{2.5*sqrt(3)});
    \draw (2.5,{2.5*sqrt(3)}) to[bend left] (5,0);
    \draw (2.5,{2.5*sqrt(3)}) node (1) [label=left:$q_1$] {};
    \draw (0,0) node (2) [label=left:$q_2$] {};
    \draw (5,0) node (3) [label=right:$q_3$] {};
\end{tikzpicture}
\begin{tikzpicture}[scale=0.65]
\draw (0,0)--(5.3,0);  
\draw (0,0)--(0,4.3);  
\draw (0,1) node (1) [label=left:{$(0,1)$}] {};
\draw (3.7,0.45) node (1) [label=right:{$(x_1,0)$}] {};
\draw (3,3.25) node (1) [label=left:{$(x_2,\alpha)$}] {};

\draw (0,1) to[bend right] (4,0);
\draw (2,3) to[bend left] (4,0);
\draw (0,1) to[bend left] (2,3);
\end{tikzpicture}
\caption{The modified $\tilde{G_1}$, and its embedding in the plane before the application of Tutte's algorithm.}
\end{figure}
Then $\tilde{G_1}$ is obviously a simple planar graph which is $3$-connected since if we take any two vertices of it we can easily find $3$ vertex independent paths connecting them. Moreover, the Dirichlet problem is exactly identical to that of $G_1$ as the Laplace equation need not hold at the boundary vertices. Then we draw $\tilde{G_1}$ in $\mathbb{R}^2$ as shown in Figure 3 in the following way. Put at position $(0,1)$ the vertex $q_2$, then put the vertex $q_3$ at $(x_1,0)$ for some $x_1>0$ and finally the vertex $q_1$ at $(x_2,\alpha)$ for some $x_2>0$. The three vertices bounding the outer face are lying on a triangle and thus satisfy the conditions of Tutte's spring theorem. Applying the theorem gives us that there exists a crossing-free plane embedding such that the position of every interior vertex is the barycenter of the positions of its neighbors. However, the coordinates of each vertex are calculated component-wise and therefore each coordinate function is harmonic at the free non-boundary vertices. In particular, by construction, the $y$ coordinate of all the vertices is exactly the solution of the Dirichlet problem of $SG_k$ with boundary values $h(q_1)=\alpha$, $h(q_2)= 1$ and $h(q_3)=0$. By our assumption, at the cell $F_iK$ the solution of the Dirichlet problem is constant, meaning that the three vertices of that cell in the barycentric embedding have all the same $y$ coordinate, and thus the edges connecting them must overlap, giving us a degenerate face of the graph. But then this is a degenerate embedding contradicting Tutte's theorem.
\end{proof}

\begin{remark}
By Steinitz's theorem we have that that the modified graphs $\tilde{G}$ are the edge graphs of convex $3$-dimensional polyhedra. Moreover, even a planar graph that is not $3$-connected can be barycentrically embedded as we can add extra edges to make it $3$-connected and then remove them after the barycentric embedding. The key element in our proof is that by adding those three extra edges we do not perturb the harmonic structure.
\end{remark}

\begin{remark}

A different proof using the topological properties of the two dimensional $SG_k$ was later given in \cite{cao2017topological}.

\end{remark}

So far in the electric network interpretation we have assumed that every edge of the graph has equal conductance which gives us the \emph{canonical harmonic structure}. However, we can put arbitrary positive conductance in which case renormalization and the existence of a harmonic structure becomes in general a difficult problem on fractals. In our case, Tutte's theorem also holds in the case that the barycenter of the Tutte embedding is not exact, but instead is a convex combination of the other points. This allows us to generalize our result in cases without assuming equal conductance. This technique in fact proves the non-degeneracy of the harmonic structure not just for the Sierpi\'nski gaskets but for other self-similar sets as well as long as we can get a planar simple $3$-connected graph by connecting the boundary vertices in $G_1$ to create the outer face of the graph. We present some examples in Figure 4. The last of these examples is a modified version of the so called Hanoi attractor, in which case the non-degeneracy of the harmonic structure applies only on the triangular cells. This approach fails on the Hexagasket and the Vicsek set as their equivalent modified graphs lack connectivity conditions. This in fact is not a coincidence, a high enough vertex connectivity is required for a non-degenerate harmonic structure. We have the following.

\begin{figure}
\centering

\begin{tikzpicture}[scale=0.35]

\draw (-5,0)--(5,0);
\draw (5,0)--(0,{5*sqrt(3)});
\draw (-5,0)--(0,{5*sqrt(3)});

\draw (0,0)--(5/2,{5/2*sqrt(3)});
\draw (0,0)--(-5/2,{5/2*sqrt(3)});
\draw (-25/8,{15/8*sqrt(3)})--(25/8,{15/8*sqrt(3)});

\draw (-30/8,{10/8*sqrt(3)})--(-10/8,{10/8*sqrt(3)});
\draw (-5/2,0)--(-10/8,{10/8*sqrt(3)});
\draw (-5/2,0)--(-30/8,{10/8*sqrt(3)});

\draw (-5/4,0)--(-15/8,{5/8*sqrt(3)});
\draw (-5/4,0)--(-5/8,{5/8*sqrt(3)});
\draw (-15/4,0)--(-35/8,{5/8*sqrt(3)});
\draw (-15/4,0)--(-25/8,{5/8*sqrt(3)});
\draw (-5/2,{10/8*sqrt(3)})--(-15/8,{15/8*sqrt(3)});
\draw (-5/2,{10/8*sqrt(3)})--(-25/8,{15/8*sqrt(3)});
\draw (-35/8,{5/8*sqrt(3)})--(-25/8,{5/8*sqrt(3)});
\draw (-15/8,{5/8*sqrt(3)})--(-5/8,{5/8*sqrt(3)});

\draw (5/4,0)--(25/8,{15/8*sqrt(3)});
\draw (10/4,0)--(30/8,{10/8*sqrt(3)});
\draw (15/4,0)--(35/8,{5/8*sqrt(3)});
\draw (15/4,0)--(10/8,{20/8*sqrt(3)});
\draw (10/4,0)--(10/8,{10/8*sqrt(3)});
\draw (5/4,0)--(5/8,{5/8*sqrt(3)});

\draw (5/8,{5/8*sqrt(3)})--(35/8,{5/8*sqrt(3)});
\draw (10/8,{10/8*sqrt(3)})--(30/8,{10/8*sqrt(3)});

\draw (5/8,{15/8*sqrt(3)})--(-10/8,{30/8*sqrt(3)});
\draw (5/8,{15/8*sqrt(3)})--(15/8,{25/8*sqrt(3)});
\draw (10/8,{20/8*sqrt(3)})--(20/8,{20/8*sqrt(3)});
\draw (-15/8,{25/8*sqrt(3)})--(15/8,{25/8*sqrt(3)});

\draw (-5/8,{15/8*sqrt(3)})--(-15/8,{25/8*sqrt(3)});
\draw (-5/8,{15/8*sqrt(3)})--(-10/8,{15/8*sqrt(3)});
\draw (-15/8,{15/8*sqrt(3)})--(-5/8,{25/8*sqrt(3)});
\draw (-5/8,{15/8*sqrt(3)})--(0,{20/8*sqrt(3)});
\draw (0,{20/8*sqrt(3)})--(-20/8,{20/8*sqrt(3)});

\draw (-10/8,{30/8*sqrt(3)})--(10/8,{30/8*sqrt(3)});
\draw (-5/8,{35/8*sqrt(3)})--(5/8,{35/8*sqrt(3)});

\draw (-5/8,{25/8*sqrt(3)})--(5/8,{35/8*sqrt(3)});
\draw (5/8,{25/8*sqrt(3)})--(-5/8,{35/8*sqrt(3)});
\draw (5/8,{25/8*sqrt(3)})--(10/8,{30/8*sqrt(3)});

\end{tikzpicture}
\begin{tikzpicture}[scale=0.35]

\draw (-5,0)--(5,0);
\draw (5,0)--(0,{5*sqrt(3)});
\draw (-5,0)--(0,{5*sqrt(3)});

\draw (-1,0)--(2,{3*sqrt(3)});
\draw (1,0)--(-2,{3*sqrt(3)});

\draw (-1,0)--(-3,{2*sqrt(3)});
\draw (1,0)--(3,{2*sqrt(3)});
\draw (-3,{2*sqrt(3)})--(3,{2*sqrt(3)});

\draw (-2,{3*sqrt(3)})--(2,{3*sqrt(3)});

\end{tikzpicture}
\begin{tikzpicture}[scale=0.35]

\draw (-5,0)--(5,0);
\draw (5,0)--(0,{5*sqrt(3)});
\draw (-5,0)--(0,{5*sqrt(3)});

\draw (-14/4,0)--(-17/4,{(6/8)*sqrt(3)});
\draw (14/4,0)--(17/4,{(6/8)*sqrt(3)});

\draw (-3/4,0)--(0,{(6/8)*sqrt(3)});
\draw (3/4,0)--(0,{(6/8)*sqrt(3)});

\draw (11/8,{(17/8)*sqrt(3)})--(23/8,{(17/8)*sqrt(3)});
\draw (0,{(6/8)*sqrt(3)})--(11/8,{(17/8)*sqrt(3)});

\draw (-11/8,{(17/8)*sqrt(3)})--(-23/8,{(17/8)*sqrt(3)});
\draw (0,{(6/8)*sqrt(3)})--(-11/8,{(17/8)*sqrt(3)});
\draw (-11/8,{(17/8)*sqrt(3)})--(11/8,{(17/8)*sqrt(3)});

\draw (11/8,{(17/8)*sqrt(3)})--(17/8,{(23/8)*sqrt(3)});
\draw (-11/8,{(17/8)*sqrt(3)})--(-17/8,{(23/8)*sqrt(3)});

\draw (-6/8,{(34/8)*sqrt(3)})--(6/8,{(34/8)*sqrt(3)});

\end{tikzpicture}

\caption{Examples of p.c.f. fractals with non-degenerate harmonic structure.}
\end{figure}
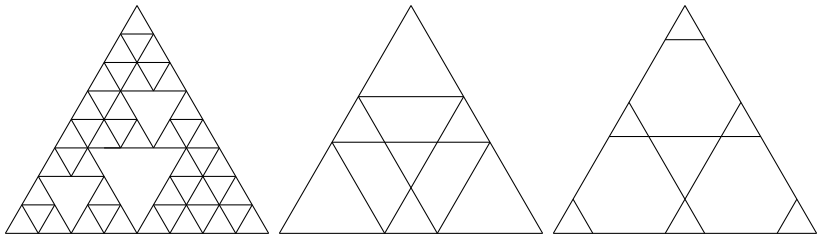

\begin{prop}
Let $\tilde{G_1}$ be the first graph approximation of a finitely ramified self-similar set $K$ with extra edges connecting all the boundary points with each other. If $\tilde{G_1}$ has vertex connectivity less than $|V_0|$, then the harmonic structure of $K$ is degenerate.
\end{prop}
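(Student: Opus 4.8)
The plan is to argue the contrapositive by exhibiting a rank-deficient harmonic extension matrix. Write $n=|V_0|$ and recall from the excerpt that degeneracy of the harmonic structure means that some $A_i$ is singular. The first step I would isolate is a purely structural one: in $\tilde{G_1}$ each cell boundary $F_iV_0$ spans a \emph{clique} on $n$ vertices. This is because $G_1$ is assembled from $N=k(k+1)/2$ (in general, finitely many) copies of the complete graph $G_0$ on $V_0$, glued at junction points, so each cell contributes a copy $F_i(G_0)=K_n$; the extra edges we add among the three (in general, among the $|V_0|$) boundary points do not destroy this. I would also record that the Dirichlet problem on $\tilde{G_1}$ is identical to that on $G_1$, so that the space of harmonic functions is still $n$-dimensional and parametrised linearly by the boundary data, $v\mapsto h_v$ with $h_v|_{V_0}=v$.

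Next I would exploit the hypothesis. Suppose the vertex connectivity is $\kappa<n$ and fix a minimum vertex cut $S$ with $|S|=\kappa$. Since $V_0$ is a clique and $|S|<n=|V_0|$, the set $V_0\setminus S$ is a nonempty clique and therefore lies inside a single component $C_0$ of $\tilde{G_1}-S$. As $S$ is a cut, there is at least one further component $C_1\neq C_0$, and by the previous sentence $C_1\cap V_0=\emptyset$: that is, $C_1$ consists \emph{entirely of interior (junction) vertices}. Moreover $C_1$ is connected and, since $\tilde{G_1}$ itself is connected, $C_1$ must have at least one edge to $S$.

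The heart of the argument is then a short linear-algebra step. Every $w\in C_1$ is an interior vertex of $G_1$, so the discrete Laplace equation holds there; and because $S$ separates $C_1$ from the rest of the graph, all neighbours of $w$ lie in $C_1\cup S$. Thus $h_v|_{C_1}$ solves a Dirichlet problem on the subgraph induced on $C_1\cup S$ with boundary data $h_v|_S$, which is well posed (every vertex of $C_1$ reaches $S$ within $C_1\cup S$, by connectivity of $C_1$ and the existence of a $C_1$--$S$ edge). Hence $h_v|_{C_1\cup S}$ is a linear function of $h_v|_S$ alone. Now pick any $v_0\in C_1$; being a vertex of $V_1$, it lies in some cell, $v_0\in F_iV_0$. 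Since $F_iV_0$ is a clique and $v_0\notin S$, the whole set $F_iV_0\setminus S$ sits in the same component as $v_0$, namely $C_1$, so $F_iV_0\subseteq C_1\cup S$. Consequently the map $v\mapsto h_v|_{F_iV_0}=A_iv$ factors as $v\mapsto h_v|_S\mapsto h_v|_{F_iV_0}$, giving $\operatorname{rank}A_i\le |S|=\kappa<n$. Therefore $A_i$ is singular and the harmonic structure is degenerate.

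The step I expect to be the main obstacle is the combinatorial one in the middle: guaranteeing that a \emph{whole} cell $F_iV_0$ is trapped inside $C_1\cup S$. Everything hinges on the clique observation, which converts "a single junction vertex of $C_1$" into "an entire cell in $C_1\cup S$", and this is exactly where a cut of size below $|V_0|$ is needed. The remaining points — that $C_1$ is interior-only, that the restricted Dirichlet problem on $C_1\cup S$ is uniquely solvable, and that the factorisation forces $\operatorname{rank}A_i\le\kappa$ — I regard as routine verifications; care is only required to confirm well-posedness of the sub-problem and to note that the parametrisation $v\mapsto h_v$ ranges over all of $\mathbb{R}^{V_0}$, so that the rank bound genuinely applies to the full matrix $A_i$.
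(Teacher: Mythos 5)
Your proposal is correct and follows essentially the same route as the paper: both take a minimum cut $S$ with $|S|<|V_0|$, use the added boundary clique to find a component of $\tilde{G_1}-S$ containing no boundary vertex, use the fact that cells are cliques to trap an entire cell inside that component together with $S$, and then conclude by a dimension count. The only cosmetic difference is the final linear-algebra step --- you bound $\operatorname{rank} A_i\le|S|$ by factoring the harmonic extension through the cut values, while the paper equivalently exhibits a nonzero vector in the kernel of the $|S|\times|V_0|$ hitting-probability matrix, producing a non-constant harmonic function that is constant on the trapped cell (and, despite your opening phrase, you argue the statement directly rather than by contraposition, which is exactly what is needed).
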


\begin{proof}
Let us assume $\tilde{G_1}$ has vertex connectivity $k$ where $k<|V_0|$. Remove vertices $v_1, \dots, v_k$ from $\tilde{G_1}$ so that the new graph becomes disconnected with at least two connected components. Let $v$ be a vertex of $\tilde{G_1}\setminus\{v_1,\dots,v_k\}$ that lies in a connected component $C$ which is not connected to any boundary point. Such a vertex and connected component will always exist as any boundary points remaining are connected to each other. Now, excluding the extra boundary edges, $\tilde{G_1}$ is by construction made of copies of the complete graph $G_0$. Let us consider a cell $B$, in other words a copy of $G_0$, that $v$ belongs to in $\tilde{G_1}$. Then the vertices of that cell must necessarily be included in the vertices of $C\cup\{v_1,\dots,v_k\}$. Let the $k\times |V_0| $ matrix $P=(p_{ij})$ with $p_{ij}$ being the probability that a simple symmetric random walk on $\tilde{G_1}$ starting at the vertex $v_i$ will arrive first at the boundary vertex $q_j$ before any other boundary vertices. This is in fact the harmonic extension matrix for the vertices $v_1,\dots v_k$. Consider the map
$$f:\mathbb{R}^{|V_0|\times 1} \to \mathbb{R}^{k\times 1}, \hspace{0.5cm} f(u)=Pu.$$
By looking at the corresponding dimensions this map has at least a one dimensional kernel and therefore we can have a non-constant harmonic function on $\tilde{G_1}$ such that it is constant on $v_1,\dots v_k$. But then such a harmonic function will be constant on $C\cup\{v_1\dots,v_k\}$ and thus on the cell $B$ giving us a degenerate harmonic structure.

\end{proof}

In \cite{cao2017topological} the authors observed that for finitely ramified self-similar sets with non-junction inner points the harmonic structure always seems to be degenerate. The above proposition shows that this is indeed always the case because removing the neighboring $|V_0|-1$ points will make that point isolated, thereby the graph connectivity can be at most $|V_0|-1$. Of course this also raises the natural question of whether the necessary condition of at least $|V_0|$-connectedness is also sufficient for non-degeneracy, and if not, then what extra conditions are required. If $|V_0|=2$ then a simple symmetry argument shows that $2$-connectedness and planarity do not suffice for non-degeneracy. But as we have already seen, if $|V_0|=3$, then $3$-connectedness and planarity of $\tilde{G_1}$ with the boundary being on the outer face suffices. Finding such a condition if it exists would settle the conjecture for the higher dimensional Sierpi\'nski gaskets of level $k$ because of their high connectivity as well as give an easy characterization of which harmonic structures are degenerate. The situation seems more delicate than the two dimensional case. However, by results of \cite{linial1988rubber} and a similar argument we can say that it is indeed true for almost all edge weights $c_{ij}$ chosen randomly, independently and uniformly on $[0,1]$, but not necessarily for the canonical harmonic structure.

Based on this approach we can think of the Kusuoka measure as a renormalized energy of the cell of the representation in $\mathbb{R}^2$. We can also give a geometric way of visualizing the Kusuoka measure on these Sierpi\'nski gaskets. In order to visualize $\nu(F_wK)$ where $|w|=m$, we can do the following. For example, in the case of $SG_2$, fix the boundary vertices $V_0$ in the plane at positions $(1/\sqrt{6},1/\sqrt{2})$, $(-1/\sqrt{6},1/\sqrt{2})$ and at the origin $(0,0)$, and perform the barycentric embedding for the $G_m$ graph. We observe that at both coordinates we will have independently the solution to the Dirichlet problem of the system of the orthonormal harmonic functions modulo constants in the definition of the Kusuoka measure. Thus if we define $L_i$ to be the the length of each side of the triangle with vertices $F_w(V_0)$ in the barycentric embedding we get that 
$$\left(\frac{5}{3}\right)^m\sum_{i=0}^2 {L_i}^2= \left(\frac{5}{3}\right)^m\left(\sum_{i\sim j}(x_i-x_j)^2+\sum_{i\sim j}(y_i-y_j)^2\right)=\nu(F_wK)$$
where those sums extend over the three sides of the cell. The Kusuoka measure of a cell $F_wK$ can therefore be visualized as $(5/3)^m$ times the sum of the areas of three squares with side lengths equal to those of the triangles in the barycentric embedding of $G_m$. Alternatively, we can think of it as the renormalized energy of the rubber band representation of the graph in $\mathbb{R}^2$ with appropriate boundary vertex positions. We present in Figure 5 an example of this barycentric embedding for $G_2$ of $SG_2$.

\begin{figure}[!htb]
\centering
\begin{tikzpicture}[scale=4.5]
\draw[line width=0.1pt,gray!30,step=1mm]
(-0.75,0) grid (0.75,0.8);

\draw (0,0) node (1) [label=left:{$(0,0)$}] {};
\draw (-{sqrt(6)/6},{(sqrt(2)/2)}) node (2) [label=left:{$(-\frac{1}{\sqrt{6}},\frac{1}{\sqrt{2}})$}] {};
\draw ({sqrt(6)/6},{(sqrt(2)/2)}) node (2) [label=right:{$(\frac{1}{\sqrt{6}},\frac{1}{\sqrt{2}})$}] {};

\draw (-{1/25*sqrt(6)/6},{9/25*sqrt(2)/2})--(0,0);
\draw ({(1/25)*(sqrt(6))/6},{9/25*sqrt(2)/2})--(0,0);
\draw (-{(1/25)*sqrt(6)/6},{(9/25)*(sqrt(2)/2)})--({(1/25)*sqrt(6)/6},{(9/25)*(sqrt(2)/2)});

\draw (-{(5/25)*sqrt(6)/6},{(15/25)*(sqrt(2)/2)})--(0,{(12/25)*(sqrt(2)/2)});
\draw ({(5/25)*sqrt(6)/6},{(15/25)*(sqrt(2)/2)})--(0,{(12/25)*(sqrt(2)/2)});
\draw (-{(5/25)*sqrt(6)/6},{(15/25)*(sqrt(2)/2)})--({(-1/25)*sqrt(6)/6},{(9/25)*(sqrt(2)/2)});
\draw ({(5/25)*sqrt(6)/6},{(15/25)*(sqrt(2)/2)})--({(1/25)*sqrt(6)/6},{(9/25)*(sqrt(2)/2)});
\draw (-{(1/25)*sqrt(6)/6},{(9/25)*(sqrt(2)/2)})--(0,{(12/25)*(sqrt(2)/2)});
\draw ({(1/25)*sqrt(6)/6},{(9/25)*(sqrt(2)/2)})--(0,{(12/25)*(sqrt(2)/2)});

\draw ({(-5/25)*sqrt(6)/6},{(15/25)*(sqrt(2)/2)})--({(-12/25)*(sqrt(6)/6)},{(20/25)*(sqrt(2)/2)});
\draw ({(-5/25)*sqrt(6)/6},{(15/25)*(sqrt(2)/2)})--({(-7/25)*(sqrt(6)/6)},{(19/25)*(sqrt(2)/2)});
\draw ({(-12/25)*sqrt(6)/6},{(20/25)*(sqrt(2)/2)})--({(-7/25)*(sqrt(6)/6)},{(19/25)*(sqrt(2)/2)});

\draw ({(5/25)*sqrt(6)/6},{(15/25)*(sqrt(2)/2)})--({(12/25)*(sqrt(6)/6)},{(20/25)*(sqrt(2)/2)});
\draw ({(5/25)*sqrt(6)/6},{(15/25)*(sqrt(2)/2)})--({(7/25)*(sqrt(6)/6)},{(19/25)*(sqrt(2)/2)});
\draw ({(12/25)*sqrt(6)/6},{(20/25)*(sqrt(2)/2)})--({(7/25)*(sqrt(6)/6)},{(19/25)*(sqrt(2)/2)});

\draw ({(-12/25)*sqrt(6)/6},{(20/25)*(sqrt(2)/2)})--({(-25/25)*(sqrt(6)/6)},{(25/25)*(sqrt(2)/2)});
\draw ({(-12/25)*sqrt(6)/6},{(20/25)*(sqrt(2)/2)})--({(-11/25)*(sqrt(6)/6)},{(21/25)*(sqrt(2)/2)});
\draw ({(-11/25)*sqrt(6)/6},{(21/25)*(sqrt(2)/2)})--({(-25/25)*(sqrt(6)/6)},{(25/25)*(sqrt(2)/2)});

\draw ({(12/25)*sqrt(6)/6},{(20/25)*(sqrt(2)/2)})--({(25/25)*(sqrt(6)/6)},{(25/25)*(sqrt(2)/2)});
\draw ({(12/25)*sqrt(6)/6},{(20/25)*(sqrt(2)/2)})--({(11/25)*(sqrt(6)/6)},{(21/25)*(sqrt(2)/2)});
\draw ({(11/25)*sqrt(6)/6},{(21/25)*(sqrt(2)/2)})--({(25/25)*(sqrt(6)/6)},{(25/25)*(sqrt(2)/2)});

\draw ({(-7/25)*sqrt(6)/6},{(19/25)*(sqrt(2)/2)})--({(-11/25)*(sqrt(6)/6)},{(21/25)*(sqrt(2)/2)});
\draw ({(-7/25)*sqrt(6)/6},{(19/25)*(sqrt(2)/2)})--(0,{(20/25)*(sqrt(2)/2)});
\draw ({(-11/25)*sqrt(6)/6},{(21/25)*(sqrt(2)/2)})--(0,{(20/25)*(sqrt(2)/2)});

\draw ({(7/25)*sqrt(6)/6},{(19/25)*(sqrt(2)/2)})--({(11/25)*(sqrt(6)/6)},{(21/25)*(sqrt(2)/2)});
\draw ({(7/25)*sqrt(6)/6},{(19/25)*(sqrt(2)/2)})--(0,{(20/25)*(sqrt(2)/2)});
\draw ({(11/25)*sqrt(6)/6},{(21/25)*(sqrt(2)/2)})--(0,{(20/25)*(sqrt(2)/2)});

\end{tikzpicture}
\caption{A barycentric embedding of the second graph approximation of $SG_2$.}
\end{figure}
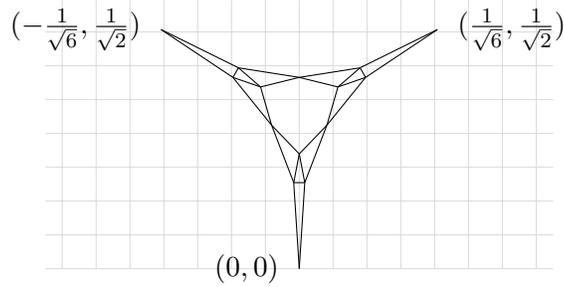

A generalization of this is the so-called harmonic Sierpi\'nski gasket. We refer the reader to \cite{teplyaev2005harmonic,teplyaev2004energy}. Our theorem then proves that we can construct the harmonic $SG_k$ in a non-degenerate way for all $k \geq 2$.

\section{Radon--Nikodym derivatives}
As we have seen from \cite{hino2009energy}, combined with theorem 1.1, that all energy measures $\nu_{h_1}$ and $\nu_{h_2}$ are mutually absolutely continuous with respect to each other on $SG_k$ and therefore the Radon--Nikodym derivatives $\frac{d\nu_{h_1}}{d\nu_{h_2}}$ exist. For $SG_2$, there is a conjecture of Strichartz and Tse in \cite{strichartz2010local} that they belong in $L^p(d\nu_{h_2})$ for $p<\frac{\log{15}}{\log{9}}$ and numerical evidence was presented there. It was shown that this is equivalent to proving that the sum
$$S(m,p)=\sum_{|w|=m} \nu_{h_1}(F_wK)^p \nu_{h_2}(F_wK)^{1-p}$$
is uniformly bounded as $m \to \infty$. It is also interesting to know whether the equivalent statement is true for all $SG_k$, in other words whether the Radon--Nikodym derivatives belong in $L^{p_k}(\nu_{h_2})$ for appropriate values of $p_k$. A restriction, as shown in \cite{strichartz2010local} for $SG_2$, is that the value $p_k$ cannot be much larger than $1$. The reasoning is that combining a symmetric function and a skew-symmetric function makes an individual term in the sum unbounded for larger $p$. We need not use necessarily only a symmetric function, as it is actually easy to compute the general decay of all energy measures in the direction of all three boundary points. If we take a normalized harmonic function on $SG_2$ with boundary values $(h(q_1),h(q_2),h(q_3)=(0,a,1)$ then after some computations we can find that
\begin{equation*}
\begin{split}
\nu_h(F_1^mK)=&\frac{(a+1)^2}{2}\left(\frac{3}{5}\right)^m+\frac{3}{2}(a-1)^2\frac{1}{15^m}\\
\nu_h(F_2^mK)=&2(a-\frac{1}{2})\left(\frac{3}{5}\right)^m+\frac{3}{2}\frac{1}{15^m}\\
\nu_h(F_3^mK)=&\frac{(a-2)^2}{2}\left(\frac{3}{5}\right)^m+\frac{3}{2}a^2\frac{1}{15^m}.\\
\end{split}
\end{equation*}
Note that the terms $3/5$ and $1/15$ correspond to the second and third eigenvalue of the harmonic extension matrices of $SG_2$. For general $SG_k$ the equivalent limitation is that $p_k<\frac{\log{(1/\lambda_k)}}{\log{(r_k/\lambda_k)}}$ where $\lambda_k$ is the smallest eigenvalue in the harmonic extension matrix corresponding to one of the boundary vertices. Indeed, as before, taking a symmetric and skew-symmetric harmonic function with respect to $q_1$ will make the sum unbounded since the individual term corresponding to $F_1^mK$ will have value
$$\nu_{h_1}(F_1^mK)^{p_k}\nu_{h_2}(F_1^mK)^{1-p_k}=c(r_k^m)^{p_k}(\lambda_k^m)^{1-p_k}=[(\frac{r_k}{\lambda_k})^{p_k}\lambda_k]^m \to \infty$$
for larger values of $p_k$. In \cite{strichartz2010local} there were some numerical results in favor of the conjecture for $SG_2$. The main idea is to use a ratio test having a value less than $1$. We present similar evidence for $SG_3$ for $p_3<\frac{\log{105}}{\log{49}}$. To calculate the measure on the cells, we use the methodology of section 6 in \cite{azzam2008conformal}. We note a typo in the $E_0$ and $E_3$ matrices. It should be 
$$E_0 = \frac{1}{7875}\left(
\begin{array}{ccc}
3701&-49&-49\\
962&287&-238\\
962&-238&287
\end{array}
\right) \, \text{ and } \,
E_3 = \frac{1}{31500}\left(
\begin{array}{ccc}
1174&49&49\\
-962&3613&1213\\
-962&1213&3613
\end{array}
\right)$$
which give the corresponding value of $dim_2\nu_h=\frac{1031}{3675}$ in Theorem 6.1. We keep the notation from \cite{strichartz2010local} where $R(m,p)=\frac{S(m,p)-S(m-1,p)}{S(m-1,p)-S(m-2,p)}$.  The harmonic functions we use in the data in Figure 6 are $h_1=(0,1,1)$ and $h_2=(0,1,-1)$. We refer the reader to \cite{strichartz2010local} for more details regarding the case of $SG_2$.

\begin{figure}
\begin{tabular}{ |p{0.45cm}| |p{1.4cm}||p{1.5cm}|p{1.66cm}|  }
 \hline
 \hline
 $m$ & $p=1.1$& $p=1.14$ & $p=1.185$\\
 \hline
  3& \hspace{1mm} 0.7865 & \hspace{1mm}  0.8940  &   \hspace{1mm} 1.0392   \\
  4& \hspace{1mm} 0.7511 & \hspace{1mm} 0.8598  &    \hspace{1mm} 1.0068    \\
  5& \hspace{1mm} 0.7424 &  \hspace{1mm} 0.8514   &   \hspace{1mm}   0.9991  \\
  6& \hspace{1mm} 0.7345 &  \hspace{1mm} 0.8447  &    \hspace{1mm}   0.9938  \\
  7& \hspace{1mm} 0.7297 &  \hspace{1mm} 0.8409  &   \hspace{1mm}   0.9910    \\
  8& \hspace{1mm} 0.7269 & \hspace{1mm}  0.8389  &     \hspace{1mm}  0.9897     \\
 \hline
\end{tabular}
\caption{R(m,p) values for $SG_3$. We omit higher $m$ because we cannot maintain both precision and feasible computational times.}
\end{figure}

\section*{Acknowledgments}
The author thanks Anders Karlsson, Robert S. Strichartz, Alexader Teplyaev and Anders \"Oberg  for valuable suggestions that contributed to the improvement of this article. The author is also especially grateful to Anders Johansson for helpful discussions and suggesting the graph geometric approach.

\nocite{*}
\bibliographystyle{abbrv}
\bibliography{Biblio}

\begin{thebibliography}{10}

\bibitem{MR1805196}
S.~Axler, P.~Bourdon, and W.~Ramey.
\newblock {\em Harmonic function theory}, volume 137 of {\em Graduate Texts in
  Mathematics}.
\newblock Springer-Verlag, New York, second edition, 2001.

\bibitem{azzam2008conformal}
J.~Azzam, M.~A. Hall, and R.~S. Strichartz.
\newblock Conformal energy, conformal laplacian, and energy measures on the
  sierpinski gasket.
\newblock {\em Transactions of the American Mathematical Society}, pages
  2089--2130, 2008.

\bibitem{bell2012energy}
R.~Bell, C.-W. Ho, and R.~S. Strichartz.
\newblock Energy measures of harmonic functions on the {S}ierpi\'nski gasket.
\newblock {\em Indiana Univ. Math. J.}, 63(3):831--868, 2014.

\bibitem{ben1999not}
O.~Ben-Bassat, R.~S. Strichartz, and A.~Teplyaev.
\newblock What is not in the domain of the {L}aplacian on {S}ierpinski gasket
  type fractals.
\newblock {\em Journal of functional analysis}, 166(2):197--217, 1999.

\bibitem{cao2017topological}
S.~Cao and H.~Qiu.
\newblock A topological proof of the non-degeneracy of harmonic structures on
  sierpinski gaskets.
\newblock {\em arXiv preprint arXiv:1703.01071}, 2017.

\bibitem{snell2000random}
P.~Doyle and J.~Snell.
\newblock Random walks and electric networks.
\newblock {\em Free Software Foundation}, 2000.

\bibitem{freiberg2013exact}
U.~Freiberg and C.~Th{\"a}le.
\newblock Exact computation and approximation of stochastic and analytic
  parameters of generalized {S}ierpinski gaskets.
\newblock {\em Methodology and Computing in Applied Probability},
  15(3):485--509, 2013.

\bibitem{hambly2002asymptotics}
B.~Hambly and T.~Kumagai.
\newblock Asymptotics for the spectral and walk dimension as fractals approach
  {E}uclidean space.
\newblock {\em Fractals}, 10(04):403--412, 2002.

\bibitem{hino2009energy}
M.~Hino.
\newblock Energy measures and indices of {D}irichlet forms, with applications
  to derivatives on some fractals.
\newblock {\em Proc. Lond. Math. Soc. (3)}, 100(1):269--302, 2010.

\bibitem{hino2015some}
M.~Hino.
\newblock Some properties of energy measures on {S}ierpinski gasket type
  fractals.
\newblock {\em J. Fractal Geom.}, 3:245--263, 2016.

\bibitem{johansson2015ergodic}
A.~Johansson, A.~{\"O}berg, and M.~Pollicott.
\newblock Ergodic theory of {K}usuoka measures.
\newblock {\em (to appear in J. Fractal Geom.), arXiv:1506.03037}, 2015.

\bibitem{MR3203400}
N.~Kajino.
\newblock Analysis and geometry of the measurable {R}iemannian structure on the
  {S}ierpi\'nski gasket.
\newblock In {\em Fractal geometry and dynamical systems in pure and applied
  mathematics. {I}. {F}ractals in pure mathematics}, volume 600 of {\em
  Contemp. Math.}, pages 91--133. Amer. Math. Soc., Providence, RI, 2013.

\bibitem{MR1076617}
J.~Kigami.
\newblock Harmonic calculus on p.c.f.\ self-similar sets.
\newblock {\em Trans. Amer. Math. Soc.}, 335(2):721--755, 1993.

\bibitem{kigami2001analysis}
J.~Kigami.
\newblock {\em Analysis on fractals}, volume 143.
\newblock Cambridge University Press, 2001.

\bibitem{kusuoka1989dirichlet}
S.~Kusuoka.
\newblock Dirichlet forms on fractals and products of random matrices.
\newblock {\em Publications of the Research Institute for Mathematical
  Sciences}, 25(4):659--680, 1989.

\bibitem{linial1988rubber}
N.~Linial, L.~Lovasz, and A.~Wigderson.
\newblock Rubber bands, convex embeddings and graph connectivity.
\newblock {\em Combinatorica}, 8(1):91--102, 1988.

\bibitem{lovasz1999geometric}
L.~Lov{\'a}sz and K.~Vesztergombi.
\newblock Geometric representations of graphs.
\newblock {\em Paul Erdos and his Mathematics}, 1999.

\bibitem{pelander2008products}
A.~Pelander and A.~Teplyaev.
\newblock Products of random matrices and derivatives on pcf fractals.
\newblock {\em Journal of Functional Analysis}, 254(5):1188--1216, 2008.

\bibitem{MR1761364}
R.~S. Strichartz.
\newblock Taylor approximations on {S}ierpinski gasket type fractals.
\newblock {\em J. Funct. Anal.}, 174(1):76--127, 2000.

\bibitem{strichartz2006differential}
R.~S. Strichartz.
\newblock {\em Differential equations on fractals: a tutorial}.
\newblock Princeton University Press, 2006.

\bibitem{strichartz2010local}
R.~S. Strichartz and S.~T. Tse.
\newblock Local behavior of smooth functions for the energy laplacian on the
  sierpinski gasket.
\newblock {\em Analysis International mathematical journal of analysis and its
  applications}, 30(3):285--299, 2010.

\bibitem{MR1765920}
R.~S. Strichartz and M.~Usher.
\newblock Splines on fractals.
\newblock {\em Math. Proc. Cambridge Philos. Soc.}, 129(2):331--360, 2000.

\bibitem{teplyaev2005harmonic}
A.~Teplyaev.
\newblock Harmonic coordinates on fractals with finitely ramified cell
  structure.
\newblock {\em Canadian Journal of Mathematics}, 60:457--480.

\bibitem{MR1761365}
A.~Teplyaev.
\newblock Gradients on fractals.
\newblock {\em J. Funct. Anal.}, 174(1):128--154, 2000.

\bibitem{teplyaev2004energy}
A.~Teplyaev.
\newblock Energy and laplacian on the {S}ierpinski gasket.
\newblock In {\em Fractal geometry and applications: a jubilee of Beno{\^\i}t
  Mandelbrot. Part 1}, pages 131--154, 2004.

\bibitem{tutte1963draw}
W.~T. Tutte.
\newblock How to draw a graph.
\newblock {\em Proc. London Math. Soc}, 13(3):743--768, 1963.

\end{thebibliography}


\begin{thebibliography}{10}
\bibitem{BHS} R. Bell, C.-W. Ho, and R. S. Strichartz, Energy measures of harmonic functions on the Sierpi\'nski gasket, \textit{Indiana Univ. Math. J.} \textbf{63} (2014), 831--868.
\bibitem{BST99} {O. Ben-Bassat, R. S. Strichartz and A. Teplyaev}, What is not in the domain of the Laplacian on Sierpinski gasket type fractals, \textit{J. Funct. Anal.} \textbf{166} (1999) 197--217. 
\bibitem{Hi10} M. Hino, Energy measures and indices of Dirichlet forms, with applications to derivatives on some fractals, \textit{Proc. Lond. Math. Soc.} (3) \textbf{100} (2010), 269--302.
\bibitem{HN06} M. Hino and K. Nakahara, On singularity of energy measures on self-similar sets II, \textit{Bull. Lond. Math. Soc.} \textbf{38} (2006), 1019--1032.
\bibitem{Ki} {J. Kigami}, \textit{Analysis on fractals}, Cambridge Tracts in Mathematics \textbf{143}, Cambridge University Press, Cambridge, 2001. 
\bibitem{Ku89} {S. Kusuoka}, Dirichlet forms on fractals and products of random matrices, \textit{Publ. Res. Inst. Math. Sci.} \textbf{25} (1989), 659--680.

\end{thebibliography}

\end{document}